\DeclareMathAlphabet{\mathpzc}{OT1}{pzc}{m}{it}

\documentclass[headsepline=true]{scrartcl}
\usepackage{amsmath}
\usepackage{amsthm, amssymb, amsfonts,bbm}
\usepackage[top=1.2in,bottom=1.2in,left=1in,right=1in]{geometry}
\usepackage[pdfborder={0 0 0}]{hyperref}
\usepackage{amscd}
\usepackage{tensor}
\usepackage{mathrsfs}
\usepackage{arydshln}
\usepackage{tikz}
\usetikzlibrary{decorations.markings}
\usetikzlibrary{decorations.pathreplacing}
\usepackage{lscape}
\usepackage{enumerate}
\usepackage[capitalize]{cleveref}
\usepackage[utf8]{inputenc}

\title{On Lehmer's question \protect\\ for integer-valued polynomials}

\author{Berend Ringeling\thanks{This work is supported by NWO grant OCENW.KLEIN.006.}
	\\[1mm]
	\small Department of Mathematics, IMAPP, Radboud University,\\[-1mm] \small PO Box 9010, 6500~GL Nijmegen, Netherlands\\[0.5mm] \small \url{b.ringeling@math.ru.nl}
}

\DeclareMathOperator{\Res}{Res}

\renewcommand{\phi}{\varphi}

\newcommand{\m}{\mathrm{m}}
\newcommand{\n}{\mathbb{N}}
\newcommand{\z}{\mathbb{Z}}

\newtheorem{thm}{Theorem}[section]
\newtheorem{lem}[thm]{Lemma} 
\newtheorem{cor}[thm]{Corollary} 
\newtheorem{prop}[thm]{Proposition}

\theoremstyle{definition} 

\newtheorem{exmp}[thm]{Example}
\newtheorem{question}[thm]{Question}

\theoremstyle{remark}

\newcommand{\ii}{\mathrm{i}}
\renewcommand{\Re}{\mathrm{Re}}

\renewcommand{\d}{\mathrm{d}}
\begin{document}
	\maketitle
\begin{abstract}
	We solve a Lehmer-type question about the Mahler measure of integer-valued polynomials.
\end{abstract}
\section{Introduction}
In the 1930s Lehmer asks, for a monic polynomial $P(x) = \prod_{j=1}^d (x - \alpha_j) \in \mathbb{Z}[x]$, whether the real quantity $\prod_{j=1}^d \max \{ 1, |\alpha_j|\}$ can be made arbitrarily close to but larger than $1$.  This quantity is called the \emph{Mahler measure of $P(x)$} \cite{BZ20}. More generally, for $P(x) = c\prod_{j=1}^d (x - \alpha_j) \in \mathbb{C}[x]$, the Mahler measure $M(P(x))$ is defined as $|c| \prod_{j = 1}^d  \max \{ 1, |\alpha_j|\}$.  Conjecturally, the answer to Lehmer's question is negative and the suspected lower bound is given by $\alpha = 1.176280818 \dots$, the unique real zero outside the unit circle of \emph{Lehmer's polynomial} $x^{10} + x^9 - x^7 - x^6 - x^5 - x^4 - x^3 + x + 1$.
Here we want to extend the original question to a bigger class of polynomials,  \emph{integer-valued} polynomials, that is, polynomials $P(x) \in \mathbb{Q}[x]$ such that $P(k) \in \z$ for all $k \in \z$. These polynomials often occur in counting problems;
%Other occurences?
 basic examples include binomial coefficients,
\[ \binom{x}{n} = \frac{x (x-1) (x-2) \dots (x - n + 1)}{n!} \in \mathbb{Q}[x]\]
for $n \in \n$.
\begin{question}
	\label{Q1}
	Can $M(P(x))$ be made arbitrarily close to but larger than $1$, when $P(x)$ is an irreducible integer-valued polynomial?
\end{question}

 The irreducibility condition is essential here, since for a reducible integer-valued polynomial $P(x)$ the bound $M(P(x)) \geq 1$ may be violated. This is seen from the example $$P(x) = \frac{x^p-x}{p}$$ for primes $p$. By Fermat's little theorem $P(x)$ is integer-valued and the Mahler measure $M(P(x)) = 1/p$ tends to $0$ as $p$ increases.
Using this example, one can construct \emph{reducible} integer-valued polynomials $P(x)$ with Mahler measure arbitrarily close to but larger than $1$. 
The following statement demonstrates that the bound $M(P(x)) \geq 1$ in Question \ref{Q1} is then best possible.
\begin{lem}
\label{Irred}
	If $P(x) \in \mathbb{Q}[x]$ is irreducible and integer-valued, then $M(P(x)) \geq 1$.
\end{lem}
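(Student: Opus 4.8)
The plan is to reduce the statement to an elementary fact about the constant term, after clearing denominators. First I would write $P(x) = \tfrac{N}{D}\,\tilde P(x)$, where $\tilde P \in \z[x]$ is primitive (the gcd of its coefficients is $1$), $N,D \in \z_{\geq 1}$ with $\gcd(N,D)=1$, and the sign is absorbed into $\tilde P$ so that $N/D>0$. Since $M(c\,f) = |c|\,M(f)$ for a scalar $c$, we have $M(P) = \tfrac{N}{D}\,M(\tilde P)$, so it suffices to prove $M(\tilde P) \geq D$. Because $P$ is integer-valued, $\tfrac{N}{D}\tilde P(k) \in \z$ for every $k \in \z$, and as $\gcd(N,D)=1$ this forces $D \mid \tilde P(k)$ for all $k$; equivalently, $D$ divides the fixed divisor $\gcd_{k \in \z}\tilde P(k)$.

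The second ingredient is the elementary bound $M(\tilde P) \geq |\tilde P(0)|$. Writing $\tilde P(x) = c\prod_j (x-\alpha_j)$, we have $|\tilde P(0)| = |c|\prod_j |\alpha_j| \leq |c|\prod_j \max\{1,|\alpha_j|\} = M(\tilde P)$, using $\max\{1,|\alpha_j|\} \geq |\alpha_j|$ term by term. Specializing the divisibility above to $k=0$ gives $D \mid \tilde P(0)$, so provided $\tilde P(0)\neq 0$ we obtain $D \leq |\tilde P(0)| \leq M(\tilde P)$, whence $M(P) = \tfrac{N}{D}M(\tilde P) \geq N \geq 1$, as desired.

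It then remains to locate where irreducibility is used, and I expect this to be the only point that needs care: it is exactly what guarantees $\tilde P(0) \neq 0$. If $\deg P \geq 2$, then $\tilde P$ is a rational multiple of $P$, hence irreducible over $\q$ of degree $\geq 2$, so it has no rational root; in particular $0$ is not a root and $\tilde P(0)\neq 0$. The degenerate cases are handled by hand: a degree-one $P$ is treated directly, and $\tilde P(0)=0$ can occur only when $\tilde P = \pm x$, in which case $P = \pm N x$ with $M(P)=N\geq 1$. The vanishing of the constant term is precisely the loophole exploited by the reducible example $(x^p-x)/p$, for which $P(0)=0$; so the argument should invoke irreducibility \emph{solely} to exclude $0$ as a root, and no finer arithmetic of the fixed divisor is required.
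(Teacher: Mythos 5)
Your proposal is correct and is essentially the paper's own argument: both rest on the elementary bound $M(P)\geq |P(0)|$ plus the fact that irreducibility forces the constant term to be nonzero, and your detour through the primitive part $\tilde P$ and the denominator $D$ is avoidable, since integer-valuedness at $k=0$ already gives $P(0)\in\mathbb{Z}$ directly, so $|P(0)|\geq 1$ at once. If anything you are slightly more careful than the paper: its claim that $P(0)$ is ``guaranteed to be a non-zero integer'' silently overlooks the degree-one case $P(x)=\pm Nx$ (irreducible, integer-valued, with $P(0)=0$), which you patch explicitly and where the conclusion $M(P)=N\geq 1$ still holds.
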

In fact, one can easily construct infinitely many (non-cyclotomic) irreducible integer-valued polynomials $P(x)$ with $M(P(x))= 1$, this is demonstrated in Example \ref{exmp1} below.
A goal of this note is to answer Question \ref{Q1} in the affirmative.
To accomplish the task, we consider the family of polynomials
\[ f_p(x) = \frac{x^p - x}{p} + x^{(p+1)/2} + 1,\]
for odd integers $p$. For primes $p$, the polynomials $f_p(x)$ are integer-valued. We prove the following three statements for them.
\begin{thm}
\label{A}
	For primes $p \equiv 3 \mod 4$, $f_p(x)$ is irreducible. 
\end{thm}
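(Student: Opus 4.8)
The first move is to clear denominators: set $g_p(x) = p\,f_p(x) = x^p + p\,x^{(p+1)/2} - x + p \in \mathbb{Z}[x]$, a monic polynomial, so by Gauss's lemma it suffices to prove that $g_p$ is irreducible over $\mathbb{Z}$. Writing $m = (p+1)/2$, the key elementary observation is the reduction $g_p(x) \equiv x^p - x \pmod p$, which by Fermat splits as $\prod_{a \in \mathbb{F}_p}(x-a)$ into distinct linear factors. Thus $g_p$ is separable mod $p$, and in any factorization $g_p = A\,B$ with $A,B$ monic in $\mathbb{Z}[x]$ the reductions $\bar A, \bar B$ are coprime products of distinct factors $(x-a)$, indexed by a partition $\mathbb{F}_p = S \sqcup T$.

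Next I record two consequences that will do most of the bookkeeping. First, since exactly one of $S, T$ contains $0$, exactly one of the constant terms $A(0), B(0)$ is divisible by $p$; as $A(0)B(0) = g_p(0) = p$, the $p$-adic valuations force one factor to have constant term $\pm p$ and the complementary factor to have constant term $\pm 1$. (Equivalently, the $p$-adic Newton polygon of $g_p$ has a single slope $-1$ edge of length one together with a slope $0$ edge of length $p-1$; note this also shows that $g_p$ splits completely over $\mathbb{Q}_p$, so the prime $p$ offers no $p$-adic irreducibility obstruction and the argument must be \emph{global}.) Second, I rule out linear factors over $\mathbb{Q}$: by the rational root theorem any rational root divides $p$, hence lies in $\{\pm 1, \pm p\}$, and a direct evaluation gives $g_p(1) = 2p$ and $g_p(\pm p) \neq 0$. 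This is exactly where the hypothesis enters: $g_p(-1) = p\big((-1)^{m}+1\big)$ vanishes when $m$ is odd (i.e.\ $p \equiv 1 \pmod 4$) but equals $2p \ne 0$ when $m$ is even, i.e.\ $p \equiv 3 \pmod 4$. So for such $p$ the polynomial $g_p$ has no factor of degree $1$.

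It remains to exclude a factorization into two factors each of degree at least $2$, and this is the heart of the matter. The plan is to leverage the two constraints above: the complementary factor $B$ has $B(0) = \pm 1$, all its complex roots are $p$-adic units, and its reduction mod $p$ is $\prod_{a \in T}(x-a)$ with $0 \notin T$. I would try to show that no such proper factor can exist by importing an extra, $p$-uniform input that pins the possible factor degrees to $\{0, p\}$ --- for instance by exhibiting, for every $p \equiv 3 \pmod 4$, an auxiliary prime $\ell$ at which $g_p$ acquires an irreducible factor of degree exceeding $p/2$, so that intersecting the admissible subset-sum degrees coming from the $\ell$-adic factorization with those coming from mod $p$ leaves only $0$ and $p$; or alternatively by a resultant/discriminant computation obstructing the required integral relations among the coefficients of the sparse polynomial $g_p$.

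The main obstacle is precisely this last step. Because $g_p$ splits completely over $\mathbb{Q}_p$, the prime $p$ --- despite controlling the constant terms beautifully --- gives no bound on factor degrees, and the mod-$p$ partition $\mathbb{F}_p = S \sqcup T$ a priori permits every intermediate degree. Any successful argument must therefore inject information invisible at $p$: either a carefully chosen auxiliary prime whose existence one can guarantee uniformly in $p$, or an archimedean/coefficient argument exploiting the sparsity of $g_p$ (only the monomials $x^p, x^m, x, 1$ occur). Securing that extra input uniformly over all primes $p \equiv 3 \pmod 4$ is the crux I expect to require the most care.
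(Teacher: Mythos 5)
Your preliminary reductions are all correct: clearing denominators to $g_p(x)=pf_p(x)=x^p+px^{(p+1)/2}-x+p$, the mod-$p$ splitting into distinct linear factors, the resulting constant-term dichotomy $\{A(0),B(0)\}=\{\pm1,\pm p\}$, and the computation $g_p(-1)=p\bigl((-1)^{(p+1)/2}+1\bigr)$, which correctly locates why $p\equiv 3 \pmod 4$ is needed (for $p\equiv 1\pmod 4$ the polynomial genuinely has the factor $x+1$). But the proof has a genuine gap, which you acknowledge yourself: the exclusion of factorizations with both degrees at least $2$ --- the entire content of the theorem --- is left as a plan. Neither proposed route is carried out, and neither is workable as stated: since, as you correctly observe, $g_p$ splits completely over $\mathbb{Q}_p$, your auxiliary-prime strategy would require producing, uniformly in $p$, a prime $\ell$ at which $g_p$ has an irreducible factor of degree exceeding $p/2$, and you give no construction nor any reason such an $\ell$ should exist for every $p$; the ``resultant/discriminant computation'' alternative is not specified at all. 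Ruling out linear factors is a vanishingly small part of the problem, so the theorem is not proved.

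The missing idea is Ljunggren's reciprocal-polynomial trick, which is in fact a precise realization of the ``coefficient argument exploiting sparsity'' you gesture at in your final paragraph. Writing $\widetilde{P}(x)=x^{\deg P}P(1/x)$, suppose $g_p=AB$ with both factors of positive degree, and form the auxiliary polynomial $k=A\widetilde{B}=b_px^p+\dots+b_0$. Then $k\widetilde{k}=g_p\widetilde{g_p}$, and the $x^p$-coefficient of this product is the sum of squares $b_0^2+\dots+b_p^2=2(p^2+1)$; after normalizing $b_0=p$ and $b_p=1$ (which your constant-term analysis essentially supplies), the remaining coefficients obey $b_1^2+\dots+b_{p-1}^2=p^2+1$. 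This $\ell^2$ budget is so tight that comparing the coefficients of $x,x^2,\dots,x^{(p\pm1)/2}$ in $k\widetilde{k}=g_p\widetilde{g_p}$ inductively determines every $b_i$ and forces $k=\pm g_p$ or $\pm\widetilde{g_p}$. That in turn would make $g_p$ and $\widetilde{g_p}$ share a root $\alpha$, and combining the two equations gives $\bigl(\alpha^{(p+1)/2}+1\bigr)^2=0$, hence $\alpha^{(p+1)/2}=-1$ and then $\alpha=\pm1$ --- impossible exactly when $p\equiv 3\pmod 4$, the same congruence you found via $g_p(-1)$. The crucial point, and the reason this succeeds where your local analysis stalls, is that the quantity $\sum_i b_i^2$ is insensitive to how the roots of $g_p\widetilde{g_p}$ are distributed among recombined factors, so it imposes a global constraint that survives the complete $p$-adic splitting.
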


\begin{thm}
\label{B}
We have the following asymptotics for $M(f_p(x))$:
\[M(f_p(x)) \sim \frac{1 + \sqrt{1 + 4/p^2}}{2}, \]
to all orders in $p$, as $p \to \infty$. In particular, $\lim_{p \to \infty} M(f_p(x)) = 1$.
\end{thm}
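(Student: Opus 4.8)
The plan is to pass to a monic polynomial, split off an exactly solvable ``main part'', and show that the remaining perturbation changes the Mahler measure by an amount that vanishes to all orders in $1/p$. First I would clear denominators: set $F_p(x) = p\,f_p(x) = x^p + p\,x^{(p+1)/2} - x + p$, which is monic, so that $M(f_p) = \frac1p M(F_p)$ and $\log M(F_p) = \frac1{2\pi}\int_0^{2\pi}\log|F_p(e^{i\theta})|\dd\theta$ by Jensen's formula. The arithmetic fact to exploit is the factorization
\[ F_p(x) = x\,G(x) + p, \qquad G(x) = x^{p-1} + p\,x^{(p-1)/2} - 1. \]
The polynomial $G$ is exactly solvable: writing $y = x^{(p-1)/2}$ turns $G(x)=0$ into $y^2 + py - 1 = 0$, with roots $y = -\lambda$ and $y = 1/\lambda$, where $\lambda = \frac{p+\sqrt{p^2+4}}2$ is the larger root of $z^2 - pz - 1$. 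The $(p-1)/2$ solutions of $x^{(p-1)/2} = -\lambda$ have modulus $\lambda^{2/(p-1)} > 1$ and the other $(p-1)/2$ lie strictly inside the unit circle, so $G$ has no zero on $|x|=1$ and $M(G) = \lambda$ exactly. Since $\frac\lambda p = \frac{1+\sqrt{1+4/p^2}}2$, the theorem is equivalent to showing that replacing $G$ by $F_p = xG + p$ perturbs the Mahler measure by a quantity vanishing to all orders.

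Next I would write that perturbation as a single integral. On $|x|=1$ one has $F_p/G = e^{i\theta} + p/G(e^{i\theta})$, so
\[ \log M(f_p) - \log\tfrac\lambda p \;=\; I_p \;:=\; \frac1{2\pi}\int_0^{2\pi}\log\bigl|1 + W(\theta)\bigr|\dd\theta, \qquad W(\theta) = e^{-i\theta}\,\frac{p}{G(e^{i\theta})}. \]
The crucial feature is that $W$ has modulus at most $1$: from $G(e^{i\theta}) = p\,e^{i(p-1)\theta/2}\bigl(1 + \tfrac{2i}p\sin\tfrac{(p-1)\theta}2\bigr)$ one gets
\[ W(\theta) = \frac{e^{-i(p+1)\theta/2}}{1 + \frac{2i}p\sin\frac{(p-1)\theta}2}, \qquad |W(\theta)| = \Bigl(1 + \tfrac4{p^2}\sin^2\tfrac{(p-1)\theta}2\Bigr)^{-1/2} \le 1, \]
with equality only at the finitely many $\theta$ where $\sin\frac{(p-1)\theta}2 = 0$. (As a check, the same computation evaluates $\frac1{2\pi}\int\log|G(e^{i\theta})|\dd\theta$ and recovers $M(G)=\lambda$ independently.)

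The heart of the argument is a moment computation showing $I_p$ has no nonzero term in its $1/p$-expansion. Expanding $\log|1+W| = \Re\sum_{n\ge1}\frac{(-1)^{n-1}}n W^n$, I would show that $\frac1{2\pi}\int_0^{2\pi}W(\theta)^n\dd\theta = 0$ for every $n$ with $p > 2n+1$. Indeed the denominator of $W$ is a smooth $\frac{4\pi}{p-1}$-periodic function of $\theta$, so its Fourier frequencies lie in $\frac{p-1}2\z$; hence $W^n$ is a superposition of exponentials $e^{i\theta(-n(p+1)+L(p-1))/2}$ with $L\in\z$, and such a term is constant only if $(p-1)\mid 2n$, which is impossible once $p > 2n+1$. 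Thus every fixed-order contribution to $I_p$ vanishes for $p$ large.

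The remaining, and main, obstacle is to upgrade this term-by-term vanishing into the genuine bound $I_p = O(p^{-N})$ for all $N$. The difficulty is exactly the region near the $(p+1)/2$ points where $|W|$ approaches $1$: there $|1+W|$ dips to size $O(1/p)$, the expansion of $\log|1+W|$ in powers of $W$ ceases to converge, and the crude estimate $\bigl|\int W^n\bigr|\le\int|W|^n$ is far too lossy (for $n\asymp p$ the right-hand side is already of order $1$). I expect the resolution to require exploiting the rapid oscillation of the phase of $W$ directly -- via repeated integration by parts or stationary phase applied to the full integrand $\log|1+W|$ -- together with the fact that, assuming $F_p$ has no zeros on the unit circle (which follows from irreducibility, Theorem \ref{A}, when $p \equiv 3 \mod 4$), the integrand is smooth and its large negative excursions are confined to intervals of length $O(1/p^2)$ whose contributions cancel against the surrounding positive values. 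Making this cancellation quantitative to all orders is where the real work lies.
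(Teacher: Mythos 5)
Your setup is, up to notation, the paper's own first step: on $|x|=1$ your $W$ is exactly $1/(xQ_p(x^N))$ with $N=(p-1)/2$ and $Q_p(x)=(x^2-1)/p+x$ (indeed $xQ_p(x^N)=xG(x)/p$), so your $I_p$ is precisely the quantity $\m\bigl(1+\tfrac{1}{xQ_p(x^N)}\bigr)$ appearing in \eqref{mp}, and your frequency argument correctly shows that the moments $\frac{1}{2\pi}\int_0^{2\pi}W(\theta)^n\,\dd\theta$ vanish for $2n<p-1$. But, as you concede in your final paragraph, this does not prove the theorem: all of the content sits in the tail $n\ge (p-1)/2$, and there your only available estimate, $\bigl|\int W^n\bigr|\le\int |W|^n$, is not just lossy but catastrophically so. Near each of the $p-1$ points where $\sin\frac{(p-1)\theta}{2}=0$ one has $|W(\theta)|^{-2}=1+\frac{4}{p^2}\sin^2\frac{(p-1)\theta}{2}\approx 1+(\theta-\theta_0)^2$, so $\frac{1}{2\pi}\int|W|^n\asymp p/\sqrt{n}$, and the tail of $\sum_n\frac{1}{n}\bigl|\int W^n\bigr|$ is then of order $\sqrt{p}$ --- not even $o(1)$, let alone exponentially small. (The term-by-term integration of $\log|1+W|$ is likewise unjustified without tail control.) So the ``real work'' you defer to stationary phase or integration by parts is the entire theorem; term-by-term vanishing of a formal $1/p$-expansion is compatible a priori with $I_p$ being as large as $\sqrt{p}$.

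The device you are missing is Lemma \ref{ZudLem}: averaging over $N$-th roots of unity gives $\m\bigl(1+\tfrac{1}{xQ_p(x^N)}\bigr)=\frac{1}{N}\,\m\bigl(1+\tfrac{(-1)^{N+1}}{xQ_p(x)^N}\bigr)$, which trades your highly oscillatory integrand for one with \emph{no} oscillation to fight: since $|Q_p(z)|>1$ on $|z|=1$ away from $\pm1$, the logarithm expands into moments $F_\ell$ each of which is a single residue at the unique zero $\alpha_2$ of $Q_p$ outside the unit circle. Lemma \ref{lem23} evaluates $F_\ell$ in closed form, and Lemma \ref{Lem24} gives $|F_\ell|\le p^{-\ell(N+1)}\binom{2\ell N+\ell-1}{\ell N}\le\bigl(2\cdot 4^N/p^{N+1}\bigr)^{\ell}$, so the series over $\ell$ converges geometrically and $|m_p-\m(Q_p(x))|\le\epsilon_p=\binom{p-1}{N}p^{-(N+1)}$, an explicit exponentially small bound (which Theorem \ref{C} then reuses); the exact residue evaluation captures precisely the cancellation that pointwise bounds on $|W|$ cannot see. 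Two smaller corrections: Theorem \ref{B} is stated for all primes, so you cannot lean on Theorem \ref{A} (which requires $p\equiv 3\bmod 4$); and ``no zeros on the unit circle follows from irreducibility'' is false in general --- Salem polynomials, Lehmer's among them, are irreducible with zeros on the circle --- nor is any such hypothesis needed, since Jensen's formula \eqref{Jensen} holds regardless of zeros on the contour.
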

\begin{thm}
\label{C}
	The sequence $M(f_p(x))$ is strictly decreasing in $p$.
\end{thm}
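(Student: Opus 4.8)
\section*{Proof proposal for Theorem~\ref{C}}

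The plan is to split the monotonicity of the whole sequence into two tasks: a uniform comparison for large $p$, driven by the explicit main term of Theorem~\ref{B}, and a direct verification for the finitely many small $p$. Write $g(p) = \frac{1+\sqrt{1+4/p^2}}{2}$ for the right-hand side of Theorem~\ref{B}. The first thing I would record is that $g$ is itself strictly decreasing: differentiating gives $g'(p) = -2p^{-3}(1+4p^{-2})^{-1/2} < 0$, and moreover $g'(p) \sim -2/p^3$, so for consecutive admissible values $p < q$ one gets a lower bound on the gap of the shape $g(p)-g(q) \geq \tfrac14 p^{-3}$ (with an explicit constant, obtained by integrating $-g'$ over $[p,q]$ and using $q \le 2p$). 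Thus $g$ alone already forces the claim, and the content is to show that the error term in Theorem~\ref{B} cannot reverse these gaps.

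Writing $E_p = M(f_p) - g(p)$, Theorem~\ref{B} asserts $E_p = o(p^{-N})$ for every $N$. In particular $p^3 E_p \to 0$, so there is a threshold $p_0$ with $|E_p| < \tfrac{1}{16}\,p^{-3}$ for all admissible $p \ge p_0$. Combined with the gap bound, for consecutive admissible $p<q$ with $p \ge p_0$ one obtains
\[ M(f_p) - M(f_q) = \bigl(g(p)-g(q)\bigr) + \bigl(E_p - E_q\bigr) \ \geq\ \tfrac14 p^{-3} - |E_p| - |E_q| \ >\ 0, \]
which yields strict monotonicity on the tail $p \ge p_0$.

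It then remains to treat $p < p_0$, which I would do by computing the finitely many Mahler measures $M(f_p)$ directly --- either from the roots of $x^p + p\,x^{(p+1)/2} - x + p$ (that is, of $p\,f_p$) via $M(f_p) = \tfrac1p\prod_{|\alpha|>1}|\alpha|$, or by numerical evaluation of the Jensen integral $\frac{1}{2\pi}\int_0^{2\pi}\log|f_p(e^{\ii\theta})|\,\d\theta$ --- and confirming that the values decrease. To keep the argument rigorous these comparisons should be certified, e.g.\ by interval arithmetic on the roots.

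The main obstacle is effectivity. Theorem~\ref{B} as stated is an asymptotic statement and does not by itself supply the value of $p_0$, so one cannot close the argument merely by citing it: the gap bound only guarantees monotonicity beyond some \emph{unspecified} threshold, and without knowing $p_0$ the finite check is not even well defined. To make it well defined I would return to the proof of Theorem~\ref{B} and extract an explicit remainder bound, i.e.\ an inequality $|E_p| \le C p^{-4}$ (or even an exponentially small bound) valid for all $p$ above an \emph{explicit} constant. The delicate point there is uniform control of the logarithmic integral near the zeros of the leading part $2+2\cos\frac{(p+1)\theta}{2}$ of $|f_p(e^{\ii\theta})|^2$: these are exactly the arcs where $f_p$ comes within $O(1/p)$ of vanishing on the unit circle, and a clean, $p$-uniform estimate of their contribution is what converts the ``all orders'' statement into the explicit threshold needed to finish. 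An alternative is to interpolate $p$ to a real parameter and show $\frac{\d}{\d p}\log M(f_p)<0$ directly from the Jensen integral, but this meets the same oscillatory near-cancellation and does not obviously simplify matters.
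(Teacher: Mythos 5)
Your skeleton is the same as the paper's: split $M(f_p)$ into the main term $g(p)=\tfrac{1+\sqrt{1+4/p^2}}{2}$ plus an error, lower-bound the gap of consecutive main terms by roughly $p^{-3}$ (the paper gets $\m(Q_p(x))-\m(Q_{p+2}(x)) \ge p^{-3}$ for $p\ge 5$ from $\log x > 1-\tfrac1x$, essentially your integrated-derivative bound), show the errors are too small to overturn the gap beyond a threshold, and verify the finitely many small cases directly (the paper proves the comparison for $p \ge 7$ and checks $m_3, m_5, m_7$ numerically). So the decomposition and the finite check are exactly right.

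The genuine gap is the one you flag yourself: effectivity. As written, your argument is conditional on an explicit remainder bound $|E_p| \le C p^{-4}$ that you defer to a future re-examination of Theorem~\ref{B}; without it the threshold $p_0$ is undefined and the proof does not close. In the paper this is not an outstanding task: the proof of Theorem~\ref{B} already yields the fully explicit, all-$p$ bound
\[ \left| m_p - \m(Q_p(x)) \right| \;\le\; \epsilon_p := \frac{1}{p^{N+1}}\binom{p-1}{N}, \qquad N = \tfrac{p-1}{2}, \]
and the entire proof of Theorem~\ref{C} is then the elementary estimate $\epsilon_p + \epsilon_{p+2} \le (4/p)^{N+1} \le (3/4)^p \le p^{-3}$ for $p \ge 7$. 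Moreover, the route you propose for extracting an explicit remainder --- uniform control of $\log|f_p|$ on the arcs of $|z|=1$ where $f_p$ comes within $O(1/p)$ of vanishing --- is precisely the hard road the paper avoids. Instead of estimating the Jensen integral of $\log|f_p|$ directly, the paper writes $m_p - \m(xQ_p(x^N)) = \m\bigl(1 + \tfrac{1}{xQ_p(x^N)}\bigr)$ as in \eqref{mp}, uses the substitution identity of Lemma~\ref{ZudLem} to trade $Q_p(x^N)$ for $Q_p(x)^N$, expands the logarithm in a convergent series (legitimate since $|Q_p(z)|>1$ on $|z|=1$ away from $\pm 1$), and evaluates each term as a single residue at the root $\alpha_2$ of $Q_p$ outside the circle (Lemmas~\ref{lem23} and~\ref{Lem24}); a binomial bound then gives the exponentially small $\epsilon_p$ with no oscillatory arc analysis whatsoever. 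So: correct architecture, honest identification of the missing piece, but the load-bearing estimate is absent, and the method you suggest for supplying it is substantially more delicate than the residue-series argument that actually delivers it.
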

Thus $M(f_p(x)) > 1$. Hence, the affirmative answer to Question \ref{Q1} is given by the family $f_p$ when $p \equiv 3 \mod 4$.
%Mention Wadim
Here we tabulate a few values of  $M(f_p(x))$ for small primes $p$:
\begin{table}[h!]
	\begin{center}
		\label{tab:table1}
		\begin{tabular}{l|r} % <-- Alignments: 1st column left, 2nd middle and 3rd right, with vertical lines in between
			$p$ & $M(f_p(x))$ \\
			\hline
			$3$ & $1.17503\dots$\\
			$7$ & $1.02169\dots$ \\
			$11$ & $1.00821\dots$\\
			$19$ & $1.00276\dots$ \\
		\end{tabular}
	\end{center}
\end{table}

\section{Properties for the Mahler measure of integer-valued polynomials}
\begin{proof}[Proof of Lemma \ref{Irred}.]
The Mahler measure of any polynomial $P(x) \in \mathbb{Q}[x]$ is bounded from below by the absolute value of the constant term. Indeed if 
\[ P(x) = a_d x^d + \dots + a_0 = a_d \prod_{j=1}^d (x - \alpha_j),\]
then
\[M(P(x)) = |a_d| \prod_{j=1}^d \max \{1, |\alpha_j|\} \geq |a_d| \prod_{j=1}^d |\alpha_j| = |a_d| \frac{|a_0|}{|a_d|} = |a_0|. \]
Moreover, if $P(x)$ is \emph{integer-valued} and \emph{irreducible}, $P(0)$ is guaranteed to be a non-zero integer. Hence $M(P(x)) \geq 1$. \qedhere 
\end{proof}
The following example shows that one can find infinitely many non-cyclotomic irreducible integer-valued polynomials with Mahler measure exactly $1$.

\begin{exmp}
\label{exmp1}
Consider the integer-valued polynomial
\[g_p(x) = \frac{x^p -x}{p} + 1\]
for primes $p > 2$. The zeros of $g_p(x)$ all lie outside the complex unit circle, as otherwise for any zeros $\alpha$ of $g_p$ inside or on the unit circle, we would have the contradictary inequality
\[0 = |g_p(\alpha)| = \left|1 +\frac{\alpha^p}{p}-\frac{\alpha}{p}\right| \geq  1 - \frac{2}{p} > 0.\]
As a consequence of this, we find $M(g_p(x)) = 1$.

We want to show that the polynomial $p g_p(x)$ is irreducible. If it were reducible, then at least one of the irreducible factors would have constant term $1$; this is impossible since $g_p$ has all the zeros outside the unit circle. 
Thus, we have found an infinite family of (non-cyclotomic) irreducible integer-valued polynomials.
\end{exmp}

\section{Irreducibility}
\begin{proof}[Proof of Theorem \ref{A}]
For a polynomial $P(x)$ of degree $d$, write $\widetilde{P}(x) = x^d P(1/x)$ for its reciprocal.
We prove the irreducibility of the polynomials $f_p$ for primes $p \equiv 3\mod 4$ following the method first used by Ljunggren in \cite{Ljunggren}, also see the expository notes \cite{Conr}. The irreducibility of $f_3$ and $f_7$ is immediate, so we deal with $p>7$ from now on.

Write \begin{equation*}
	f^{*}_p(x) = p f_p(x) = x^p + p x^{\frac{p+1}{2}} - x + p
\end{equation*}
and
\begin{equation*}
	\widetilde{f^{*}_p}(x) = x^p f^{*}_p(1/x) = px^p - x^{p-1} + px^{\frac{p-1}{2}} + 1
\end{equation*}
for its reciprocal.
\begin{lem}
\label{lem21}
The polynomials $f^{*}_p$ and $\widetilde{f^{*}_p}$ have no zeros in common.
\end{lem}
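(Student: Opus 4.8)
The plan is to show that a common zero forces $\alpha \in \{1,-1\}$, and that the congruence $p \equiv 3 \pmod 4$ then excludes both possibilities. First I would note that any common zero $\alpha$ is nonzero, since $f^{*}_p(0) = p \neq 0$. Because $\widetilde{f^{*}_p}(x) = x^p f^{*}_p(1/x)$, a common zero $\alpha$ amounts to a zero of $f^{*}_p$ whose reciprocal $1/\alpha$ is again a zero of $f^{*}_p$; but rather than exploit this reformulation I would work directly with the two equations $f^{*}_p(\alpha) = 0$ and $\widetilde{f^{*}_p}(\alpha) = 0$.

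The key device is the substitution $u = \alpha^{(p-1)/2}$, chosen so that every exponent occurring in the two polynomials is expressible through $u$ and a single power of $\alpha$: indeed $\alpha^p = \alpha u^2$, $\alpha^{(p+1)/2} = \alpha u$, $\alpha^{p-1} = u^2$ and $\alpha^{(p-1)/2} = u$. Substituting, the two relations collapse to equations that are \emph{linear} in $\alpha$ with coefficients polynomial in $u$:
\[ \alpha\,(u^2 + p u - 1) + p \= 0, \qquad p u^2\,\alpha + (-u^2 + p u + 1) \= 0. \]

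Next I would eliminate $\alpha$ between these two linear equations by cross-multiplying, i.e.\ by setting the $2 \times 2$ coefficient determinant to zero. The pleasant point is that the relevant cross term factors as $(u^2 + p u - 1)(-u^2 + p u + 1) = (pu)^2 - (u^2-1)^2$, so after subtracting the other product $p^2 u^2$ the entire elimination reduces to $(u^2 - 1)^2 = 0$, giving $u^2 = 1$. Feeding $u^2 = 1$ back into the first linear equation leaves $p u \alpha = -p$, hence $\alpha = -u \in \{1,-1\}$.

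The final step is a consistency check, and this is where the hypothesis $p \equiv 3 \pmod 4$ does the work: we need $u = \alpha^{(p-1)/2}$ and $\alpha = -u$ to hold simultaneously. If $\alpha = 1$ then $u = \alpha^{(p-1)/2} = 1$, contradicting $\alpha = -u = -1$; if $\alpha = -1$ then $\alpha = -u$ forces $u = 1$, yet $\alpha^{(p-1)/2} = (-1)^{(p-1)/2} = -1$ because $(p-1)/2$ is odd when $p \equiv 3 \pmod 4$. Both cases are contradictory, so no common zero exists. I expect the main obstacle to be spotting the linearizing substitution $u = \alpha^{(p-1)/2}$ together with the accompanying factorization; once these are in hand the elimination is immediate. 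It is worth remarking that the congruence enters only at the very end, which also explains why the statement should fail for $p \equiv 1 \pmod 4$, where $-1$ is a genuine common zero.
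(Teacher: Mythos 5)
Your proof is correct and takes essentially the same route as the paper: both treat a common zero $\alpha$ as a simultaneous solution of the two vanishing equations, eliminate to force $\alpha \in \{1,-1\}$, and let the congruence $p \equiv 3 \pmod{4}$ deliver the contradiction. The only difference is mechanical: the paper adds the two equations (after multiplying the reciprocal relation by $\alpha$) to obtain $p\,(\alpha^{(p+1)/2}+1)^2 = 0$ directly, whereas your substitution $u = \alpha^{(p-1)/2}$ and $2 \times 2$ determinant elimination arrive at the equivalent conditions $u^2 = 1$ and $\alpha = -u$, i.e.\ $\alpha^{(p+1)/2} = -u^2 = -1$.
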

\begin{proof}
%Hence-overschot hier
	Suppose $\alpha$ is a zero of both $f^{*}_p$ and $\widetilde{f^{*}_p}$,
so that
	\begin{equation*}
		\alpha^p - \alpha + p \alpha^{\frac{p+1}{2}} + p =0 \quad \text{and} \quad	\alpha - \alpha^p + p \alpha^{\frac{p+1}{2}} + p\alpha^{p+1}=0.
	\end{equation*}
The equations imply
	\[ (\alpha^{\frac{p+1}{2}} + 1)^2=0,\]
	hence $\alpha^{\frac{p+1}{2}} = - 1$.
	Substituting this in the first equation we find that $\alpha = \pm 1$. This is impossible if $p \equiv 3 \mod 4$.
\end{proof}

Suppose $f^{*}_p(x)$ is reducible, i.e. $f^{*}_p= g h$ for $g, h \in 
\mathbb{Z}[x]$ of positive degree. Define an auxiliary polynomial $$k = g \widetilde{h} = b_p x^p + \dots + b_0;$$
then $k \widetilde{k} = f^{*}_p \widetilde{f^{*}_p}$.
Note that $k \neq \pm f^{*}_p$ or $\pm \widetilde{f^{*}_p}$, as otherwise $k = g \widetilde{h}$ and $f^{*}_p = g h$ are equal, up to sign, hence $\widetilde{h}$ and $h$ share a common zero, which is impossible by Lemma \ref{lem21}.

We next compute the coefficients of $k$ by comparing the coefficients in

\begin{equation}
\label{EQ1}
	k \widetilde k = f^{*}_p \widetilde{f^{*}_p} = px^{2p} - x^{2p-1} + p^2 x^{\frac{3p+1}{2}} - px^{p+1} + 2(p^2+1)x^p - px^{p-1} + p^2 x^{\frac{p-1}{2}} -x + p.
\end{equation}
Reading off the $x^{2p}$-coefficient we have $b_0 b_p = p$. We can assume that $b_0 = \pm p$ and $b_p = \pm 1$, possibly by interchanging $k$ and $\widetilde{k}$. Further we may assume $b_0 = p$ and $b_p = 1$ by possibly replacing $k$ with $-k$.

Comparing the $x^p$-coefficient of $k \widetilde{k}$ in \eqref{EQ1}, we find
\begin{equation*}
	2(p^2+1) = b_0^2 + \dots + b_p^2,
\end{equation*}
therefore
\begin{equation}
	\label{This}
	p^2+ 1 = b_1^2 + \dots + b_{p-1}^2.
\end{equation}
Comparing the $x$-coefficient in \eqref{EQ1} we conclude that
\begin{equation*}
	-1 = b_0 b_{p-1} + b_1 b_p
\end{equation*}
implying $b_1 = - 1  - b_{p-1} p$. The latter equality is only possible if either $b_{p-1} = 0$ and $b_1 = -1$, or $b_{p-1} = -1$ and $b_1 = p -1$, as otherwise \eqref{This} fails. Consider the two cases separately.

\textbf{Case $b_{p-1} = -1$ and $b_1 = p-1$.}
We obtain from \eqref{This}
\begin{equation}
	\label{This3}
	2p - 1 = b_2^2 + \dots + b_{p-2}^2
\end{equation}
Compare the $x^2$-coefficient to find that
\[ b_0b_{p-2} + b_1b_{p-1} + b_2b_{p} = 0, \]
so that $b_2 = -1 - p(b_{p-2} - 1)$. According to \eqref{This3}, the equality is only possible if $b_{p-2} = 1$ and $b_2 = -1$. 
Next compare the $x^3$-coefficient to find that
\[ b_0b_{p-3} + b_1b_{p-2} + b_2b_{p-1} + b_3b_{p} = 0,\]
hence $p(b_{p-3} + 1) + b_3 = 0$. Again, from \eqref{This3} we conclude that $b_{p-3} = -1$ and $b_3 = 0$.
We claim that $b_{p-j} = (-1)^j$ and $b_j = 0$ for $2 < j < \frac{p-1}{2}$. Comparing the $x^j$-coefficient for such $j$ gives

\[b_0b_{p-j} + b_1 b_{p-j+1} + b_2 b_{p-j+2} + \dots + b_j b_p =0. \]
By induction all the terms $b_{i}$ vanish for $2 < i < j$,
so that $b_j = -p(b_{p-j} - (-1)^j)$. From \eqref{This3} and the fact that $p$ divides $b_j$, we conclude that $b_j = 0$ and $b_{p-j} = (-1)^j$.
Finally, compare the coefficient of $x^{\frac{p-1}{2}}$ in \eqref{EQ1}:
\[p^2 = b_0b_{\frac{p+1}{2}} + b_1b_{\frac{p+1}{2} + 1} + b_2 b_{\frac{p+1}{2} + 2} + \dots + b_{\frac{p-1}{2}} b_p. \]
This translates into
\[p^2 = p (b_{\frac{p+1}{2}} - (-1)^{\frac{p-3}{2}}) + b_{\frac{p-1}{2}}. \]
Therefore, $b_{\frac{p-1}{2}}$ is divisible by $p$, hence  $b_{\frac{p-1}{2}} = 0$ from \eqref{This3} implying $b_{\frac{p+1}{2}} = (-1)^{\frac{p-1}{2}} + p = p - 1$. This calculation contradicts \eqref{This3}.

\textbf{Case $b_{p-1} = 0$ and $b_1 = -1$.} In this case we have
\begin{equation}
	\label{This4}
	p^2 = b_2^2 + \dots + b_{p-2}^2.
\end{equation}
We claim that $b_j = 0$ for $1 < j < \frac{p-1}{2}$.
Suppose otherwise, let $1< j' <\frac{p-1}{2}$ be the smallest integer such that $b_{j'} \neq 0$. 
Comparing the $x^i$-coefficient for $1 < i < j'$ in \eqref{EQ1} results in
\[b_0 b_{p-i} + b_1 b_{p-i+1} \dots + b_ib_p =0;\]
it follows by induction that $b_{p-i} = 0$ for all such $i$ as well.

Comparing the $x^{j'}$-coefficient in \eqref{EQ1} we find out that
\[ b_0 b_{p - j'} + b_1b_{p-j' + 1} + \dots+ b_{j'}b_p = 0,\]
hence $p b_{p-j'} + b_{j'} = 0$. Since $b_{j'} \neq 0$ by our assumption, we have $|b_{j'}| \geq p$ and $|b_{p -j'}| \geq 1$.  Comparing this with \eqref{This4} we find this impossible. The contradiction implies that $b_j =0$ and $b_{p-j} = 0$ for $1 < j < \frac{p-1}{2}$.

Finally, consider the $x^{\frac{p+1}{2}}$-coefficient in \eqref{EQ1}:
\[b_0 b_{p - \frac{p-1}{2}} + b_1 b_{p - \frac{p-1}{2} + 1} + \dots + b_{\frac{p-1}{2}} b_p = p^2; \]
this simplifies to 
\[p b_{\frac{p+1}{2}} + b_{\frac{p-1}{2}} = p^2. \]
Comparing with \eqref{This4}, the only solution to this equation is $b_{\frac{p+1}{2}} = p$ and $b_{\frac{p-1}{2}} = 0$. 
We conclude that $k = f^{*}_p$, which gives a contradiction.

Thus, $f^{*}_p$ is irreducible. This proves Theorem \ref{A}.
\end{proof}
\section{Asymptotics}
For this part, it is more convenient to work with the \emph{logarithmic Mahler measure} $\m(P(x)) = \log(M(P(x)))$.
Jensen's formula allows one to write it as
\begin{equation}
	\label{Jensen}
	\m(P(x)) = \frac{1}{2 \pi \ii} \oint_{|z| = 1} \log |P(z)| \ \frac{\d z}{z}.
\end{equation}

Denote $N = (p-1)/2$ and $Q_p(x) = (x^2-1)/p+x$ and define
\[ m_p = \m \left( \frac{x^p - x}{p} + x^{\frac{p+1}{2}} + 1 \right) = \m(xQ_p(x^N) + 1).\]
We will show that, for all integers $N$, 
\[m_p \sim \m (x Q_p(x^N)) = \m(Q_p(x)) = \log  \frac{1 + \sqrt{1 + 4/p^2}}{2}  \]
to all orders in $p$, i.e. the difference of $m_p$ and $\m(Q_p(x))$ is $\mathcal{O}(p^n)$ for all $n \in \z$.

We have
\begin{align} 
\label{mp}
	m_p - \m \left( x Q_p(x^N) \right) = \m\left( 1 + \frac{1}{x Q_p(x^N)}  \right) = \frac{1}{N} \cdot \m \left( 1 + \frac{(-1)^{N+1}}{x Q_p(x)^N} \right),
\end{align}
where the last equality follows from the more general observation:
\begin{lem}
	\label{ZudLem}
	If $P(x)$ is a polynomial and $N$ an integer, then
	\[\m \left(1 + \frac{(-1)^{N+1}}{x P(x)^N}\right) = N \cdot \m \left(1 + \frac{1}{x P(x^N)}\right) \] 
\end{lem}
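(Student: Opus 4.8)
The plan is to prove the identity directly from the integral (Jensen) representation \eqref{Jensen} of the logarithmic Mahler measure, extended to rational functions via additivity ($\m(g/h) = \m(g) - \m(h)$, so that $\m(f) = \int_0^1 \log|f(e^{2\pi\ii t})|\,\dd t$ for any nonzero rational $f$), by \emph{unfolding} the $N$-fold covering $t \mapsto Nt$ of the unit circle. I take $N \ge 1$, which is the only case needed since $N = (p-1)/2$.

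First I would write the right-hand side as
\[ N \cdot \m\!\left(1 + \frac{1}{xP(x^N)}\right) = N \int_0^1 \log\left| 1 + \frac{1}{e^{2\pi\ii t}\,P(e^{2\pi\ii N t})} \right| \dd t, \]
and split $[0,1]$ into the $N$ subintervals $[k/N,(k+1)/N]$. On the $k$-th piece I substitute $t = (k+u)/N$ with $u \in [0,1]$. The crucial simplification is that $e^{2\pi\ii N t} = e^{2\pi\ii(k+u)} = e^{2\pi\ii u}$ becomes independent of $k$, while $e^{2\pi\ii t} = \zeta_k\, e^{2\pi\ii u/N}$ only acquires the root-of-unity factor $\zeta_k := e^{2\pi\ii k/N}$. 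After the factors of $1/N$ cancel, the sum over $k$ of the integrands becomes the logarithm of a product, giving
\[ N \cdot \m\!\left(1 + \frac{1}{xP(x^N)}\right) = \int_0^1 \log \left| \prod_{k=0}^{N-1}\left( 1 + \frac{\zeta_k^{-1}}{w\,P(e^{2\pi\ii u})} \right) \right| \dd u, \qquad w := e^{2\pi\ii u/N}. \]

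The key algebraic step is to evaluate the product over $N$-th roots of unity. Writing $a = 1/\bigl(w\,P(e^{2\pi\ii u})\bigr)$ and using that $\{\zeta_k^{-1}\} = \{\zeta_k\}$, I would substitute $z = -1/a$ into $z^N - 1 = \prod_{k=0}^{N-1}(z - \zeta_k)$ to obtain
\[ \prod_{k=0}^{N-1}(1 + \zeta_k a) = 1 + (-1)^{N+1} a^N. \]
Since $w^N = e^{2\pi\ii u}$, we have $a^N = 1/\bigl(e^{2\pi\ii u}\,P(e^{2\pi\ii u})^N\bigr)$, so the product equals $1 + (-1)^{N+1}/\bigl(e^{2\pi\ii u}\,P(e^{2\pi\ii u})^N\bigr)$. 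Recognizing this as exactly the integrand of $\m\bigl(1 + (-1)^{N+1}/(xP(x)^N)\bigr)$ closes the argument.

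The main thing to be careful about is the well-definedness of the integrals: the rational functions involved have zeros and poles (from the zeros of $P$ on the unit circle, and from $x=0$), so $\log|\cdot|$ has logarithmic singularities; these are integrable, so each Mahler measure is finite and the interchange of sum and integral is harmless. I also note that, although the substitution introduces the fractional power $w = e^{2\pi\ii u/N}$, no branch ambiguity arises because only the modulus enters—this is precisely what makes the unfolding clean. Everything else is bookkeeping.
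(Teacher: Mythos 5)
Your proof is correct and is essentially the paper's own argument run in the opposite direction: your unfolding substitution $t=(k+u)/N$ is precisely the integral-level form of the two invariances the paper invokes ($\m(f(x^N))=\m(f(x))$ and the rotation $x\mapsto\xi x$), and your product identity $\prod_{k=0}^{N-1}(1+\zeta_k a)=1+(-1)^{N+1}a^N$ is the same cyclotomic factorization $1-w^N=\prod_{\xi^N=1}(1-\xi w)$ that the paper applies after rewriting the left-hand side as $\m\bigl(1-\bigl(-1/(xP(x^N))\bigr)^N\bigr)$. The restriction to $N\geq 1$ and the remarks on integrability of $\log|\cdot|$ are consistent with what the paper tacitly assumes, so there is nothing to fix.
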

\begin{proof}
Indeed, Jensen's formula implies that
	\begin{align*}
		\m \left(1 + \frac{(-1)^{N+1}}{x P(x)^N}\right) = \m \left(1 + \frac{(-1)^{N+1}}{x^N P(x^N)^N}\right) &= \m\left(1 - \left( \frac{-1}{xP(x^N)} \right)^N \right) \\&= \sum_{\xi \colon \xi^N = 1} \m \left(1 + \frac{\xi}{x P(x^N)}\right),
	\end{align*}
	where the sum is over all roots of unity of degree $N$.
	The required identity follows from noticing that  \[\m \left(1 + \frac{\xi}{x P(x^N)}\right) =\m \left(1 + \frac{1}{x P(x^N)}\right),\] by substituting $\xi x$ for $x$ in the integral \eqref{Jensen} for the corresponding Mahler measure.
\end{proof}
%Using \eqref{Jensen}, we can write 
%\[ \m \left(1 + \frac{1}{x Q(x)^N} \right) = \Re \, \frac{1}{2 \pi \ii} \oint_{|z| = 1} \log \left(1 +\frac{1}{z Q(z)^N}\right) \frac{\d z}{z}. \]
Since 
\[\frac{1}{|Q_p(z)|^2} = \left|\frac{p}{z^2 + pz -1} \right|^2 = \frac{p^2}{2 + p^2 - 2 \, \Re(z^2)} < 1\]
for $z \in \mathbb{C} \setminus \{ \pm 1\}, |z| = 1$, we get the convergent expansion
\begin{align*}
	\log\left(1 + \frac{(-1)^{N+1}}{z Q_p(z)^N} \right) = \sum_{\ell = 1}^\infty \frac{(-1)^{\ell N-1}}{\ell z^\ell Q_p(z)^{\ell N}}.
\end{align*}
for all such $z$.
From this we find out that
\begin{equation}
	\label{eqn}
	\m \left(1 + \frac{(-1)^{N+1}}{x Q_p(x)^N} \right) = \Re \, \frac{1}{2 \pi \ii} \oint_{|z| = 1} \log \left(1 +\frac{(-1)^{N+1}}{z Q_p(z)^N}\right) \frac{\d z}{z} = \Re \, \sum_{\ell=1}^\infty \frac{(-1)^{\ell N-1}}{\ell} F_{\ell},
\end{equation} 
where 
\[F_{\ell} = \frac{1}{2 \pi \ii} \oint_{|z| = 1} \frac{\d z}{z^{\ell+1} Q_p(z)^{\ell N}} = \frac{p^{\ell N}}{2 \pi \ii} \oint_{|z| = 1} \frac{\d z}{z^{\ell+1} (z-\alpha_1)^{\ell N}(z-\alpha_2)^{\ell N}}\]
and $\alpha_1,\alpha_2$ are the zeros of $Q_p(x)$ ordered by $|\alpha_2| > 1 > |\alpha_1|$.
We will examine the asymptotics of $F_{\ell}$ for $\ell \geq 1$ as $p \to \infty$. We can explicitly compute these integrals.
\begin{lem}
\label{lem23}
	For $\ell \geq 1$, we have
\begin{equation*}
 F_\ell = (-1)^\ell p^{\ell N} \sum_{j = 0}^{\ell N - 1} \binom{2\ell N-2-j}{\ell N -1} \binom{\ell+j}{j} \frac{1}{(\alpha_2-\alpha_1)^{2 \ell N-1-j}\alpha_2^{\ell+1+j}}.
\end{equation*}
\end{lem}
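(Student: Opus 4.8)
The plan is to evaluate the contour integral defining $F_\ell$ by residues. Write the integrand as $p^{\ell N}z^{-(\ell+1)}(z-\alpha_1)^{-\ell N}(z-\alpha_2)^{-\ell N}$, a rational function whose only poles inside $|z|=1$ are $z=0$ (order $\ell+1$) and $z=\alpha_1$ (order $\ell N$, since $|\alpha_1|<1$), the remaining pole $z=\alpha_2$ lying outside. First I would observe that summing the residues at $0$ and $\alpha_1$ directly is possible but produces a cumbersome expression that does not visibly collapse to the single sum in the statement.

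The key point is that the integrand decays at infinity like $z^{-(2\ell N+\ell+1)}$, and since $2\ell N+\ell+1\ge 2$ its residue at infinity vanishes. By the residue theorem the sum of residues over all finite poles is zero, so that $F_\ell$, which equals $p^{\ell N}$ times the sum of the residues at $0$ and $\alpha_1$, may instead be computed from the single residue at $\alpha_2$:
\[ F_\ell = -\,p^{\ell N}\,\Res_{z=\alpha_2}\frac{1}{z^{\ell+1}(z-\alpha_1)^{\ell N}(z-\alpha_2)^{\ell N}}. \]
This trades the two awkward residues for one, and—crucially—expanding about $\alpha_2$ produces exactly the factors $\alpha_2$ and $\alpha_2-\alpha_1$ that appear in the claimed formula.

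It remains to compute the residue at the order-$\ell N$ pole $\alpha_2$, namely $\dfrac{1}{(\ell N-1)!}\left.\dfrac{\d^{\ell N-1}}{\d z^{\ell N-1}}\bigl[z^{-(\ell+1)}(z-\alpha_1)^{-\ell N}\bigr]\right|_{z=\alpha_2}$. I would expand this with the general Leibniz rule. The $k$-th derivative of $z^{-(\ell+1)}$ contributes $(-1)^k\frac{(\ell+k)!}{\ell!}\,z^{-(\ell+1+k)}$, while the $(\ell N-1-k)$-th derivative of $(z-\alpha_1)^{-\ell N}$ contributes $(-1)^{\ell N-1-k}\frac{(2\ell N-2-k)!}{(\ell N-1)!}\,(z-\alpha_1)^{-(2\ell N-1-k)}$. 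Evaluating at $z=\alpha_2$ and writing $j=k$, the factorial factors regroup as $\binom{\ell+j}{j}$ and $\binom{2\ell N-2-j}{\ell N-1}$, while the signs combine to the $k$-independent constant $(-1)^{\ell N-1}$. Multiplying by $-p^{\ell N}$ yields the prefactor $(-1)^{\ell N}p^{\ell N}$, which equals $(-1)^\ell p^{\ell N}$ since $N=(p-1)/2$ is odd in the relevant range $p\equiv 3\bmod 4$.

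The main obstacle is purely bookkeeping: matching the products of factorials produced by the Leibniz expansion to the two binomial coefficients—in particular recognizing $\frac{(2\ell N-2-k)!}{(\ell N-1)!\,(\ell N-1-k)!}=\binom{2\ell N-2-k}{\ell N-1}$—and tracking the signs so that they cancel cleanly against the $-1$ coming from the residue-at-infinity relation. Conceptually the one nontrivial move is invoking the vanishing residue at infinity to bypass the messy residues at $0$ and $\alpha_1$; everything after that is a single application of Leibniz's rule together with these two factorial identities.
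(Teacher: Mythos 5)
Your proof is correct and takes essentially the same route as the paper: the paper likewise evaluates $F_\ell$ as $-p^{\ell N}\Res_{z=\alpha_2}\bigl[z^{-(\ell+1)}(z-\alpha_1)^{-\ell N}(z-\alpha_2)^{-\ell N}\bigr]$, justified by $\alpha_2$ being the only singularity outside the unit circle, and then extracts the coefficient of $(z-\alpha_2)^{-1}$ by multiplying the binomial series for $z^{-(\ell+1)}$ and $(z-\alpha_1)^{-\ell N}$ about $\alpha_2$ — term-by-term the same computation as your Leibniz expansion of the derivative formula. Your sign bookkeeping, arriving at $(-1)^{\ell N}$ and identifying it with $(-1)^\ell$ via the oddness of $N=(p-1)/2$ for $p\equiv 3 \bmod 4$, is if anything slightly more careful than the paper's, which writes the prefactor $(-1)^{\ell-1}$ directly.
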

\begin{proof}
	This follows from Cauchy's integral theorem. The integrand has precisely one singularity outside the unit circle. Therefore, the value of the integral is given by $$-\Res_{z = \alpha_2} \frac{1}{z^{\ell+1}Q_p(z)^{\ell N}}.$$
	The formula follows by expanding $1/z^{\ell+1}$ into a series in $z - \alpha_2$:
	\begin{equation}
		\label{SER}
		\frac{1}{z^{\ell+1}} = \sum_{j = 0}^\infty (-1)^j \binom{\ell+j}{j} \frac{1}{\alpha_2^{\ell+1+j}} (z - \alpha_2)^j
	\end{equation}
and extracting the nonpositive powers of $z-\alpha_2$ in the Laurent expansion of $1/Q_p(z)^{\ell N}$:
\begin{equation}
\label{PF}
	\frac{1}{(z-\alpha_1)^{\ell N} (z - \alpha_2)^{\ell N}} = \sum_{j = 0}^{\ell N} (-1)^{j} \binom{\ell N + j -1}{j}\frac{1}{(\alpha_2 - \alpha_1)^{\ell N + j}} (z-\alpha_2)^{j - \ell N} + \mathcal{O}(z -\alpha_2).
\end{equation}
Taking the product of \eqref{SER} and \eqref{PF} we conclude with the formula
\begin{equation*}
	(-1)^{\ell-1}p^{\ell N}\sum_{j = 0}^{\ell N - 1} \binom{2\ell N-2-j}{\ell N -1} \binom{\ell+j}{j} \frac{1}{(\alpha_2-\alpha_1)^{2\ell N-1-j}\alpha_2^{\ell+1+j}}
\end{equation*}
for the coefficient of $1/(z-\alpha_2)$. \qedhere
\end{proof}
Using Lemma \ref{lem23}, we will estimate $|F_\ell|$ from above.
\begin{lem}
	\label{Lem24}
	For $\ell \geq 1$, we have
	\[|F_\ell| \leq \frac{1}{p^{\ell(N+1)}} \binom{2\ell N + \ell - 1}{\ell N}.\]
\end{lem}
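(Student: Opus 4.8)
The plan is to start from the closed form for $F_\ell$ furnished by Lemma~\ref{lem23}, apply the triangle inequality, and bound each summand uniformly in $j$. First I would record the arithmetic of the two roots of $Q_p$. Since $\alpha_1,\alpha_2$ are the solutions of $x^2 + px - 1 = 0$, we have $\alpha_1 + \alpha_2 = -p$ and $\alpha_1\alpha_2 = -1$, and explicitly $\alpha_2 = \tfrac{-p-\sqrt{p^2+4}}{2}$ together with $\alpha_2 - \alpha_1 = -\sqrt{p^2+4}$. Consequently the two quantities in the denominators obey the lower bounds
\[ |\alpha_2| = \frac{p+\sqrt{p^2+4}}{2} > p, \qquad |\alpha_2 - \alpha_1| = \sqrt{p^2+4} > p. \]

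Substituting these bounds into the formula of Lemma~\ref{lem23}, each factor $|\alpha_2-\alpha_1|^{-(2\ell N - 1 - j)}$ and $|\alpha_2|^{-(\ell+1+j)}$ is at most the corresponding power of $1/p$. The point is that the $j$-dependence in the exponents cancels: combining the prefactor $p^{\ell N}$ with $p^{-(2\ell N - 1 - j)}$ and $p^{-(\ell+1+j)}$ leaves exactly $p^{\,\ell N - (2\ell N - 1 - j) - (\ell + 1 + j)} = p^{-\ell(N+1)}$, independently of $j$. Hence
\[ |F_\ell| \leq \frac{1}{p^{\ell(N+1)}} \sum_{j=0}^{\ell N - 1} \binom{2\ell N - 2 - j}{\ell N - 1}\binom{\ell+j}{j}, \]
and the remaining task is the purely combinatorial one of bounding this binomial sum by $\binom{2\ell N + \ell - 1}{\ell N}$.

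For that step I would rewrite $\binom{2\ell N - 2 - j}{\ell N - 1} = \binom{2\ell N - 2 - j}{\ell N - 1 - j}$ and recognize a Vandermonde-type convolution. Applying the identity $\sum_{k=0}^{n}\binom{r+k}{k}\binom{s+n-k}{n-k} = \binom{r+s+n+1}{n}$ with the choice $r = \ell$ and $s = n = \ell N - 1$ evaluates the sum \emph{exactly} to $\binom{2\ell N + \ell - 1}{\ell N - 1}$. Finally, comparing adjacent entries in the same row via the ratio $\binom{2\ell N + \ell - 1}{\ell N}\big/\binom{2\ell N + \ell - 1}{\ell N - 1} = (N+1)/N \geq 1$ shows $\binom{2\ell N + \ell - 1}{\ell N - 1} \leq \binom{2\ell N + \ell - 1}{\ell N}$, which yields the claimed estimate. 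The main thing to get right is the exact Vandermonde identity together with its off-by-one indexing — the sum is the neighbouring binomial $\binom{2\ell N + \ell - 1}{\ell N - 1}$ rather than the target itself — while the estimation of the denominators and the telescoping of the $p$-powers is routine.
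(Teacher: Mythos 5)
Your proof is correct and follows essentially the same route as the paper: bound $|\alpha_2|,|\alpha_2-\alpha_1|$ below by $p$ so the $p$-powers telescope to $p^{-\ell(N+1)}$, then evaluate the binomial sum exactly by Vandermonde convolution and enlarge it to $\binom{2\ell N+\ell-1}{\ell N}$. Indeed, your value $\binom{2\ell N+\ell-1}{\ell N-1}$ coincides with the paper's $\frac{p-1}{p+1}\binom{2\ell N+\ell-1}{\ell N}$, since $\frac{p-1}{p+1}=\frac{N}{N+1}$ is exactly the ratio of the two adjacent binomial coefficients.
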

\begin{proof}
	The estimates $|\alpha_2 - \alpha_1| \geq p$ and $|\alpha_2| \geq p$ imply
	\begin{align*}
		|F_\ell| &\leq \frac{1}{p^{\ell (N+1)}} \sum_{j = 0}^{\ell N-1} \binom{2 \ell N-2-j}{\ell N-1} \binom{\ell+j}{j}\\
		&= \frac{p-1}{p+1} \frac{1}{p^{\ell (N+1)}} \binom{2\ell N+\ell-1}{\ell N} \leq \frac{1}{p^{\ell (N+1)}} \binom{2\ell N+\ell-1}{\ell N}. \qedhere
	\end{align*}
\end{proof}
It follows from Lemma \ref{Lem24} that $F_\ell$ decays exponentially in $\ell N$.
\begin{proof}[Proof of Theorem \ref{B}.]
Using Equations \eqref{mp}, \eqref{eqn} and Lemma \ref{Lem24}, we conclude that
\[\left|m_p - \m(Q_p(x))\right| \leq \frac{1}{p^{N+1}} \binom{p-1}{N} =: \epsilon_p\]
meaning that the difference of the Mahler measures decays exponentially in $p$ as $p \to \infty$. This finishes the proof of Theorem \ref{B}.
% in other words,
%\[m_p - \m(Q(x)) = \mathcal{O}(p^n)\] for all positive integers $n$ as $p \to \infty$. This concludes our proof of Theorem \ref{B}.
\end{proof}
\begin{proof}[Proof of Theorem \ref{C}.]
To show that the sequence $m_p$ for odd $p$ is decreasing, it suffices to prove the inequality
\begin{align}
	\label{Ineq}
	\m(Q_p(x)) - \epsilon_{p} > \m(Q_{p+2}(x)) + \epsilon_{p+2},
\end{align}
where $\epsilon_{p}$ is defined in the proof of Theorem \ref{B}. 
We can estimate  $\m(Q_p(x)) - \m(Q_{p+2}(x))$ from below using that $\log(x) > 1 - \frac{1}{x}$ for $x > 1$. Indeed, for $p \geq 5$ we have
\begin{align*}
	\m(Q_p(x)) - \m(Q_{p+2}(x)) &= \log \frac{1+\sqrt{1+4/p^2}}{1+\sqrt{1+4/(p+2)^2}}\\
	& > \frac{\sqrt{1+4/p^2} - \sqrt{1+4/(p+2)^2}}{1+\sqrt{1+4/p^2}}\\
	& \geq \frac{1}{2p^2} - \frac{1}{2(p+2)^2} \geq \frac{1}{p^3}.
\end{align*}
On the other hand, using $\binom{2n}{n} \leq 2^n$ for $n \geq 1$, we can estimate $\epsilon_p + \epsilon_{p+2}$ from above: for $p \geq 7$ we obtain
\begin{align*}
	\epsilon_{p}+\epsilon_{p+2} &\leq \frac{1}{p^{N+1}} 4^N + \frac{1}{(p+2)^{N+2}} 4^{N+1}
	\leq \left( \frac{4}{p} \right)^{N+1} \leq \left( \frac{3}{4} \right)^p \leq \frac{1}{p^3}.
\end{align*}
This implies inequality \eqref{Ineq} for $p \geq 7$. Together with
\[ m_3 =  0.16129\dots \, , \qquad m_5 =0.04920\dots \, , \qquad m_7 =0.02145\dots \, ,\]
it concludes our proof of Theorem \ref{C}.
%Finally, the estimate 
%\[\frac{1}{p^3} \geq \left(\frac{3}{4} \right)^{p} \]
%implies
%\[m(Q_p(x)) - m(Q_{p+2}(x)) > \frac{1}{p^3} \geq \left(\frac{3}{4} \right)^{p} \geq \epsilon_{p} + \epsilon_{p+2}\]
%for $p \geq 7$.
\end{proof}
\section{Discussion}

The choice for the family of polynomials $f_p(x)$ is far from optimal: among integer-valued polynomials of prime degree $p \equiv 3 \mod 4$, it is not the one with smallest Mahler measure larger than $1$. This can already be seen when $p=3$: an integer-valued polynomial with the smallest Mahler measure is
$$Q_3(x) = \frac{2}{3}x^3 - \frac{1}{2}x^2 - \frac{1}{6}x -1,$$ with the Mahler measure $1.02833 \dots$ much smaller than $M(f_3(x)) = 1.17503\dots$\,.

For $d = 2, 3, \dots$, define $Q_d(x)$ to be an irreducible integer-valued polynomial of degree $d$ with smallest Mahler measure larger than $1$. Then the following questions arise.
\begin{question}
	How to (efficiently) compute these polynomials $Q_d(x)$?
\end{question}
\begin{question}
 What can be said about the asymptotics of $M(Q_d(x))$ for $d \to \infty$?
\end{question}
\paragraph{Acknowledgements}
The author would like to thank Fran\c{c}ois Brunault, David Hokken and Wadim Zudilin for interesting discussions and helpful comments.

  %We ask Lehmer's question for a bigger class of polynomials. \emph{integer-valued} polynomials $P(x)$, that is polynomials with rational coefficients such that $P(k) \in \z$ for $k \in \z$.
\end{document}